\newtheorem{theorem}{Theorem}[section]
\newtheorem{lemma}[theorem]{Lemma}
\newtheorem{es}[theorem]{Example}
\newtheorem{remark}[theorem]{Remark}
\def\erre{{\rm I\!R}}
\def\RR{{\rm I\!R}}
\def\enne{{\rm I\!N}}
\def\ZZ{{\mathbb Z} }
\title[Existence of
solutions...]{Existence of
solutions for $p$-Laplacian discrete equations}
\author{Giovanni Molica Bisci}
\address[G. Molica Bisci]{Dipartimento P.A.U., Universit\`a  degli
Studi Mediterranea di Reggio Calabria, Salita Melissari - Feo di
Vito, 89124 Reggio Calabria, Italy} \email{gmolica@unirc.it}
\author{Du\v{s}an Repov\v{s}}
\address[D. Repov\v{s}]{Faculty of Education, and Faculty of Mathematics and Physics\\ University of Ljubljana, POB 2964, Ljubljana, Slovenia 1001}
\email{dusan.repovs@guest.arnes.si}
\thanks{{\em 2010 Mathematics Subject Classification:} Primary 39A10; Secondary 47J30, 58E05}
\keywords{Discrete boundary value problem; existence result;
discrete $p$-Laplacian; critical point theory}
\thanks{Typeset by \LaTeX}
\begin{document}
\begin{abstract}
 This work is devoted to the study of the existence of at least one (non-zero) solution to a problem involving the discrete $p$-Laplacian. As a special case, we derive an
existence theorem for a second-order discrete problem, depending on a positive real parameter $\alpha$, whose prototype is given by
$$
\left\{
\begin{array}{l}
-\Delta^2u({k-1})=\alpha f(k,u(k)),\quad \forall\; k  \in
{\mathbb{Z}}[1,T] \\
{u(0)=u({T+1})=0.}\\
\end{array}
\right.
$$
Our approach is based on variational methods in finite-dimensional setting.
\end{abstract}
\maketitle

\section{Introduction}
We are interested in investigating nonlinear
discrete boundary value problems by using variational methods. This
approach
has been recently adopted, for instance, in \cite{APR1,APR2,BX} and \cite{JiangZhou,LiangWeng,zhangzhangliu}.\par
More precisely, for every $a,b\in \ZZ$, such that $a<b$, set
$\ZZ[a,b]:=\{a,a+1,...,b\}$ and let $T\geq 2$ be a positive integer.\par
 The aim of this paper is to prove some existence results for the following discrete problem
\begin{equation}\tag{$D_{f}$} \label{dani3}
\left\{
\begin{array}{l}
-\Delta(\phi_p(\Delta u({k-1})))
= f(k,u(k)) ,\quad \forall\; k \in
\ZZ[1,T]
\\ {u(0)=u({T+1})=0, } \quad \quad \qquad \qquad \qquad \\
\end{array}
\right.
\end{equation}
where $p>1$, $\phi_p:\erre\rightarrow \erre$ is given by $\phi_p(s):=|s|^{p-2}s$, for every $s\in\erre$, $f:\ZZ[1,T]\times\erre\rightarrow \erre$ is a continuous function, and
$\Delta u(k-1):=u(k)-u(k-1)$ is the
forward difference operator. \par

\indent In recent years equations involving the discrete $p$-Laplacian operator, subject to different boundary conditions, have been widely studied by many authors and several approaches.

In particular, Problem \eqref{dani3} has been previously studied, for instance, in \cite{APR2,Chu,JiangZhou2} by using various methods. See the recent papers \cite{GG, MR} for the discrete anisotropic case.\par
 Motivations for this interest arose in by different fields of
research, such as computer science, mechanical engineering, control systems, artificial or biological neural networks,
economics and others.\par
 Moreover, the main background in the real world for the discrete $p$-Laplacian operator are the problems on the boundary between different substances.\par
Set
$$
c(p,T):=\left\{
\begin{array}{ll}
\displaystyle\frac{1}{p}\left[\displaystyle\left(\frac{2}{T}\right)^{p-1}+\left(\frac{2}{T+2}\right)^{p-1}\right] & {\rm if}\,\,\, T\,\, {\rm is\,\,even} \\
 &\\
\displaystyle \frac{2^p}{p(T+1)^{p-1}} & {\rm if}\,\,\, T\,\, {\rm is\,\,odd.} \\
\end{array}
\right.
$$

 Via variational approach, we are able to prove the existence of a
solution for problem \eqref{dani3} by requiring that
$$
\frac{\displaystyle\sum_{k=1}^{T}\max_{|\xi|\leq \varepsilon} \int_0^{\xi}f(k,s)ds}{\varepsilon^p}<c(p,T),
$$
for some $\varepsilon>0$. See condition \eqref{condizione} in Theorem \ref{Esistenza}.\par
Next, by using Theorem \ref{Esistenza}, we study a parametric version of problem \eqref{dani3}, defined as follows
\begin{equation}\tag{$D^{f}_\alpha$} \label{dani34}
\left\{
\begin{array}{l}
-\Delta(\phi_p(\Delta u({k-1})))
= \alpha f(k,u(k)) ,\quad \forall\; k \in
\ZZ[1,T]
\\ {u(0)=u({T+1})=0, } \quad \quad \qquad \qquad \qquad \\
\end{array}
\right.
\end{equation}
 where $\alpha$ is a positive real parameter.\par In this case, requiring a suitable behaviour of the potentials at zero and at infinity, we obtain, for sufficiently large $\alpha$, the existence of at least one positive solution for problem \eqref{dani34}, see Theorem \ref{Esistenza2}. This result can be achieved exploiting Theorem \ref{Esistenza} together with the well-known variational characterization of the first eigenvalue of the $p$-Laplacian operator in the finite-dimensional context (see \cite{APR2}).\par
 The simplest example we can deal with is
a second-order boundary value problem.\par

\begin{theorem}\label{Esistenza3}
Let $f:\ZZ[1,T]\times[0,+\infty)\rightarrow [0,+\infty)$ be a continuous function satisfying the following hypotheses$:$
$$
\lim_{\xi\rightarrow +\infty}\frac{\displaystyle\sum_{k=1}^{T}\int_0^{\xi}f(k,t)dt}{\xi^2}=0
$$
and
$$
\gamma_\kappa:=\liminf_{\xi\rightarrow 0^+}\frac{\displaystyle\int_0^{\xi}f(k,t)dt}{\xi^2}>0,
$$
for every $k\in \ZZ[1,T]$.
Then for every
$$
\alpha>\frac{2}{\displaystyle\min_{k\in \ZZ[1,T]}\gamma_k}\sin^{2}\left(\frac{\pi}{2(T+1)}\right),
$$
the following second-order discrete problem
\begin{equation} \tag{$S_\alpha$} \label{dxi}
\left\{
\begin{array}{l}
-\Delta^2u({k-1})=\alpha f(k,u(k)),\quad \forall\; k  \in
{\mathbb{Z}}[1,T] \\
{u(0)=u({T+1})=0,}\\
\end{array}
\right.
\end{equation}
has at least one positive solution.
\end{theorem}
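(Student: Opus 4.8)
The plan is to obtain Theorem~\ref{Esistenza3} as the instance $p=2$ of the parametric result Theorem~\ref{Esistenza2}. Since $\phi_2(s)=s$, one has $-\Delta(\phi_2(\Delta u(k-1)))=-\Delta^2 u(k-1)$, so \eqref{dxi} is exactly problem \eqref{dani34} with $p=2$. Writing $F(k,\xi):=\int_0^\xi f(k,t)\,dt$ for the potential and extending $f$ to $\ZZ[1,T]\times\erre$ by $f(k,\xi):=f(k,0)$ for $\xi<0$ (a continuous extension that does not affect the positive solutions we are after), it then suffices to check that the two limit assumptions of Theorem~\ref{Esistenza3} are exactly the hypotheses on $F$ at infinity and at the origin required by Theorem~\ref{Esistenza2}, and to make the resulting lower bound on $\alpha$ explicit; I recall below the mechanism that produces that bound.

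Because $f\ge 0$, the extended $F(k,\cdot)$ is nondecreasing on $\erre$, so $\max_{|\xi|\le\varepsilon}F(k,\xi)=F(k,\varepsilon)$ for every $\varepsilon>0$; hence the hypothesis $\lim_{\xi\to+\infty}\xi^{-2}\sum_{k=1}^{T}F(k,\xi)=0$ guarantees that, for \emph{every} $\alpha>0$, one can pick $\varepsilon$ so large that $\alpha\,\varepsilon^{-2}\sum_{k=1}^{T}\max_{|\xi|\le\varepsilon}F(k,\xi)<c(2,T)$, which is precisely condition \eqref{condizione} for the potential $\alpha F$. Thus Theorem~\ref{Esistenza} yields, for every $\alpha>0$, a solution $u_\alpha$ of \eqref{dxi} that is a local minimum of the energy $J_\alpha(u)=\tfrac12\sum_{k=1}^{T+1}|\Delta u(k-1)|^2-\alpha\sum_{k=1}^{T}F(k,u(k))$. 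To force $u_\alpha\not\equiv 0$ one uses the behaviour at the origin: let $e_1>0$ be a first eigenfunction of $-\Delta^2$ on $\ZZ[1,T]$ with homogeneous boundary data, so that, by the variational characterization of the first eigenvalue, $\sum_{k=1}^{T+1}|\Delta e_1(k-1)|^2=\lambda_1\sum_{k=1}^{T}e_1(k)^2$ with $\lambda_1=4\sin^2\!\big(\tfrac{\pi}{2(T+1)}\big)$. Since $\gamma_k>0$ for every $k$, for small $\delta>0$ one has $F\big(k,\delta e_1(k)\big)\ge\big(\min_j\gamma_j-\eta(\delta)\big)\delta^2 e_1(k)^2$ with $\eta(\delta)\to 0^+$, whence
$$
J_\alpha(\delta e_1)\le\delta^{2}\Big(\sum_{k=1}^{T}e_1(k)^2\Big)\Big[\tfrac{\lambda_1}{2}-\alpha\big(\min_j\gamma_j-\eta(\delta)\big)\Big],
$$
which is strictly negative for a suitable $\delta$ as soon as $\alpha>\dfrac{\lambda_1}{2\min_j\gamma_j}=\dfrac{2}{\min_{k\in\ZZ[1,T]}\gamma_k}\sin^2\!\Big(\dfrac{\pi}{2(T+1)}\Big)$; this is exactly the range in the statement. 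Consequently the energy takes a negative value in the ball where the local minimum is sought, so $u_\alpha\neq 0$.

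Finally, since after the extension $f\ge 0$ on all of $\ZZ[1,T]\times\erre$, the solution satisfies $-\Delta^2 u_\alpha(k-1)=\alpha f\big(k,u_\alpha(k)\big)\ge 0$ on $\ZZ[1,T]$, so the discrete maximum principle on $\ZZ[0,T+1]$ gives $u_\alpha\ge 0$, and its strong version then upgrades non-negativity to strict positivity on $\ZZ[1,T]$, because a non-zero superharmonic function cannot attain the value $0$ at an interior point; hence $u_\alpha$ is a positive solution of \eqref{dxi}. I expect the nontriviality step to be the only genuinely delicate point: the test function must be taken along the first eigendirection in order to read off \emph{exactly} the displayed threshold for $\alpha$, after which the discrete strong maximum principle does the rest; the remaining verifications (monotonicity of the potentials, the limit at infinity yielding \eqref{condizione}, and the closed form of $\lambda_1$) are routine.
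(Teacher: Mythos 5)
Your proposal is correct and follows essentially the same route as the paper, which obtains Theorem \ref{Esistenza3} as the case $p=2$ of Theorem \ref{Esistenza2} combined with the formula $\lambda_{1,2}=4\sin^2\bigl(\frac{\pi}{2(T+1)}\bigr)$, so that $\frac{\lambda_{1,2}}{2\min_k\gamma_k}$ becomes exactly the stated threshold. The steps you spell out (truncation of $f$, the limit at infinity yielding condition \eqref{condizione}, the test along the first eigenfunction to show the local minimum is non-zero, and the strong maximum principle for positivity) are precisely the proof of Theorem \ref{Esistenza2} specialized to $p=2$.
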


 We remark that the results obtained for second-order discrete equations in \cite{BX, Henderson} and our theorems are mutually independent. Moreover, the approach adopted here can be used studying the discrete counterpart of the following problem
\[
\left\{
\begin{array}{l}
\displaystyle\frac{\partial^2 u}{\partial x^2}+\frac{\partial^2 u}{\partial y^2}+f((x,y),u(x,y))=0,\\
\smallskip
        u(x,0)=u(x,n+1)=0,\,\,\,\forall x\in (0,m+1)\\
        \smallskip
        u(0,y)=u(m+1,y)=0,\,\,\,\forall y\in (0,n+1),
\end{array}
 \right.
  \]
  where $m,n\in \enne\setminus\{0\}$ and $f$ is a suitable continuous function. See \cite{GO} and \cite{IM} for details. We refer to the monograph of Cheng \cite{Cheng} for a geometrical interpretation of this equations.\par
The plan of the paper is as follows. Section 2 is devoted to our abstract framework and preliminaries. Successively, in Section 3 we prove our main result (see Theorem \ref{Esistenza}). The parametric case is discussed in the last section (see Theorem \ref{Esistenza2}), where, a concrete example of an application is also presented (see Example \ref{esempio0}).

\section{Abstract Framework}\label{section2}

\indent On the $T$-dimensional Banach space
 $$
 H:=\{u:\ZZ[0,T+1]\rightarrow\RR: u(0)=u({T+1})=0\},
 $$
 endowed by the norm
$$
\|u\|:=\left(\sum_{k=1}^{T+1}|\Delta u({k-1})|^p\right)^{1/p},
$$
we define the functional $J:H\rightarrow \RR$ given by
$$
J(u):=\frac{1}{p}\sum_{k=1}^{T+1}|\Delta u({k-1})|^p-\sum_{k=1}^{T}\displaystyle\int_0^{u(k)} f(k,t)dt,
$$
for every $u\in H$.\par
 We recall that a \textit{solution} of problem \eqref{dani3} is a function $u\in H$ such that
$$
\begin{array}{l} {\displaystyle \sum_{k=1}^{T+1}\phi_p(\Delta u({k-1}))\Delta v(k-1)}\\
\displaystyle \qquad\qquad\qquad\qquad=\sum_{k=1}^{T}f(k,u(k))v(k),\,\,\,\,\,\,\,
\end{array}
$$
for every $v \in H$.\par
 We observe that
problem~\eqref{dani3} has a variational structure. Indeed, the functional $J$ is differentiable in $u\in H$ and one has
$$
\langle J'(u), v\rangle = {\displaystyle \sum_{k=1}^{T+1}\phi_p(\Delta u({k-1}))\Delta v(k-1)}
$$
$$
\qquad \qquad \qquad \qquad\, -\sum_{k=1}^{T}f(k,u(k))v(k),
$$
for every $v \in H$.\par
 Thus critical points of $J$ are solutions to problem~\eqref{dani3}.
In order to find these critical points, we will make use of the following local minimum result due to Ricceri (see \cite{ricceri}) recalled here on the finite-dimensional setting.

\begin{theorem}\label{CV}
 Let $(E,\|\cdot\|)$ be a finite-dimensional Banach space and let $\Phi,\Psi:X\rightarrow \erre$ be two lower semicontinuous functionals, with $\Psi$ coercive and $\Phi(0_E)=\Psi(0_E)=0$. Further, set $$J_\mu:=\mu\Psi+\Phi.$$ Then for each $\sigma>\displaystyle\inf_{u\in X}\Psi(u)$ and each $\mu$ satisfying
 $$
\mu>-\frac{\displaystyle\inf_{u\in \Psi^{-1}((-\infty,\sigma])}\Phi(u)}{\sigma}
 $$
 the restriction of $J_\mu$ to $\Psi^{-1}((-\infty,\sigma))$ has a global minimum.
 \end{theorem}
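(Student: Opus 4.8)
The plan is to reduce the assertion to the classical Weierstrass theorem on a compact set, the only genuine content being to force the minimizer to land in the \emph{open} sublevel set, which is exactly where the quantitative bound on $\mu$ is consumed. Throughout I identify the space named $E$ with the space named $X$ in the infima.

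First I would record two preliminaries. Write $S:=\Psi^{-1}((-\infty,\sigma])$ for the closed sublevel set and $S^\circ:=\Psi^{-1}((-\infty,\sigma))$ for its open counterpart. I claim $S$ is compact: lower semicontinuity of $\Psi$ makes $S$ closed, while coercivity makes it bounded (a sequence $u_n\in S$ with $\|u_n\|\to+\infty$ would force $\Psi(u_n)\to+\infty$, contradicting $\Psi(u_n)\le\sigma$), and in the finite-dimensional space $X$ closed and bounded gives compact. Moreover, since $0\in S$ and $\Phi(0)=0$, the number $m:=\inf_{u\in S}\Phi(u)$ satisfies $m\le 0$, whence $-m/\sigma\ge 0$ and the hypothesis forces $\mu>0$; consequently $\mu\Psi$ is lower semicontinuous and so is $J_\mu=\mu\Psi+\Phi$ as a sum of two lower semicontinuous functionals.

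The core step: being lower semicontinuous on the compact set $S$, the functional $J_\mu$ attains a global minimum on $S$ at some $u^\ast$ by the Weierstrass theorem. I would then prove $u^\ast\in S^\circ$, i.e. $\Psi(u^\ast)<\sigma$. To this end, bound $J_\mu$ from below on the boundary: if $\Psi(u)=\sigma$ then $J_\mu(u)=\mu\sigma+\Phi(u)\ge\mu\sigma+m$. Multiplying $\mu>-m/\sigma$ by $\sigma>0$ gives $\mu\sigma+m>0$, while the origin, which lies in $S^\circ$, yields $J_\mu(0)=\mu\Psi(0)+\Phi(0)=0$. Hence $\inf_{S}J_\mu\le J_\mu(0)=0<\mu\sigma+m\le J_\mu(u)$ for every $u$ with $\Psi(u)=\sigma$, so no boundary point can realize the minimum over $S$; therefore $\Psi(u^\ast)<\sigma$. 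Since $u^\ast\in S^\circ\subseteq S$ already minimizes $J_\mu$ over the larger set $S$, it minimizes $J_\mu$ over $S^\circ$ as well, which is precisely the claim.

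The main obstacle—and the sole place the hypotheses enter quantitatively—is this interior-localization step; everything else (compactness, lower semicontinuity of the sum, attainment of the minimum) is routine in finite dimensions, which is exactly the simplification of Ricceri's general reflexive-space statement that dispenses with the weak-topology machinery. The one point I would double-check is that $0\in S^\circ$, i.e. $\sigma>0$; this is guaranteed in the intended normalization $\inf_X\Psi=\Psi(0)=0$, under which the requirement $\sigma>\inf_X\Psi$ reads $\sigma>0$, so that the origin is an admissible competitor in the open sublevel set.
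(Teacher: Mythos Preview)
The paper does not prove this theorem at all: it is quoted as a finite-dimensional specialization of Ricceri's variational principle and used as a black box in the proof of Theorem~\ref{Esistenza}. There is therefore no ``paper's own proof'' to compare against.

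Your argument stands on its own and is correct. Compactness of $S=\Psi^{-1}((-\infty,\sigma])$ via lower semicontinuity, coercivity and finite dimension; $\mu>0$ from $m\le 0$ and $\sigma>0$; Weierstrass on $S$; and the comparison $J_\mu(0)=0<\mu\sigma+m\le J_\mu(u)$ on $\{\Psi=\sigma\}$ to force the minimizer into $S^\circ$ --- each step is sound and this is exactly the elementary route one expects when Ricceri's reflexive-space/weak-topology machinery collapses in finite dimensions.

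Your closing caveat is the right one to flag. The proof, as you wrote it, uses $0\in S$ (hence $m\le 0$, hence $\mu>0$) and $0\in S^\circ$ as a competitor, both of which require $\sigma>0$. The theorem as printed only assumes $\sigma>\inf_X\Psi$, and nothing forces $\inf_X\Psi=0$; but in the paper's sole application $\Psi(u)=\|u\|^p$, so $\inf_X\Psi=\Psi(0)=0$ and $\sigma>0$ automatically. For the literal statement with $\sigma\le 0$ a different comparison point would be needed, so your reservation is well placed rather than a gap in your reasoning.
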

\indent See \cite{R2,R3,R4} for related abstract critical points results. We also mention the monograph \cite{k2} for some topics on variational methods adopted in this paper and \cite{A} for general facts on finite difference equations.
\section{The Main Result}

By \cite[Lemma 4]{cit} one has that
\begin{equation}\label{immersione2}
\|u\|_\infty:=\displaystyle\max_{k\in
\ZZ[1,T]}|u(k)|\leq \frac{1}{\kappa}\|u\|,
\end{equation}
for every $u\in H$, where
\begin{equation*}
\kappa:=\left\{
\begin{array}{ll}
\left[\displaystyle\left(\frac{2}{T}\right)^{p-1}+\left(\frac{2}{T+2}\right)^{p-1}\right]^{1/p} & {\rm if}\,\,\, T\,\, {\rm is\,\,even} \\
 &\\
\displaystyle \frac{2}{(T+1)^{(p-1)/p}} & {\rm if}\,\,\, T\,\, {\rm is\,\,odd.} \\
\end{array}
\right.
\end{equation*}

\begin{remark}\rm{
 Note that
$$
\left[\left(\frac{2}{T}\right)^{p-1}+\left(\frac{2}{T+2}\right)^{p-1}\right]^{-1/p}<\frac{(T+1)^{(p-1)/p}}{2}.
$$
\noindent Indeed, since the continuous function $\theta:(0,T+1)\rightarrow \erre$ defined by
$$
\theta(s):=\frac{1}{(T-s+1)^{p-1}}+\frac{1}{s^{p-1}},
$$
attains its minimum $\displaystyle\frac{2^p}{(T+1)^{p-1}}$ at $\displaystyle s=\frac{T+1}{2}$, one has
$$
\displaystyle\frac{2^p}{(T+1)^{p-1}}< \theta(T/2).
$$
\noindent Then
$$
\frac{2}{(T+1)^{(p-1)/p}}< \left[\displaystyle\left(\frac{2}{T}\right)^{p-1}+\left(\frac{2}{T+2}\right)^{p-1}\right]^{1/p}=\theta(T/2)^{1/p},
$$
\noindent and the conclusion is achieved.}
\end{remark}
Set
$$
F_k(\xi):=\int_0^{\xi}f(k,s)ds,
$$
for every $k\in \ZZ[1,T]$ and $\xi\in\RR$.\par
\smallskip
With the above notations our result reads as follows.

\begin{theorem}\label{Esistenza}
Let $f:\ZZ[1,T]\times\erre\rightarrow \erre$ be a continuous function and assume that there exists $\varepsilon>0$ such that
\begin{equation}\label{condizione}
\frac{\displaystyle\sum_{k=1}^{T}\max_{|\xi|\leq \varepsilon} F_k(\xi)}{\varepsilon^p}<\frac{\kappa^p}{p}.
\end{equation}
Then problem \eqref{dani3} has at least one solution such that $\|u\|_\infty<\varepsilon$.
\end{theorem}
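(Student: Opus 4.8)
The plan is to apply Ricceri's local minimum theorem (Theorem \ref{CV}) with the obvious splitting of the energy functional. Set
$$
\Psi(u):=\frac{1}{p}\sum_{k=1}^{T+1}|\Delta u(k-1)|^p=\frac{1}{p}\|u\|^p,\qquad \Phi(u):=-\sum_{k=1}^{T}F_k(u(k)),
$$
so that $J=\Psi+\Phi$ is exactly the functional $J_\mu$ with parameter $\mu=1$. Both functionals are continuous (hence lower semicontinuous) on the finite-dimensional space $H$, they vanish at $0_H$, and $\Psi$ is coercive because $\|\cdot\|$ is a genuine norm on $H$. The idea is to choose the level $\sigma$ in Theorem \ref{CV} so that the threshold on $\mu$ dictated by the theorem is strictly below $1$; then $J=1\cdot\Psi+\Phi$ will attain a global minimum on the sublevel set $\Psi^{-1}((-\infty,\sigma))$, and that minimizer, being an interior local minimum of $J$ on all of $H$, is a critical point of $J$, hence a solution of \eqref{dani3}.

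The natural choice is $\sigma:=\varepsilon^p/p$. First I would check $\sigma>\inf_H\Psi=0$, which is clear since $\varepsilon>0$. Next, the key computation is to bound $-\inf_{\Psi^{-1}((-\infty,\sigma])}\Phi(u)$. If $\Psi(u)\le\sigma$, then $\|u\|^p\le\varepsilon^p$, so by the embedding inequality \eqref{immersione2} we get $\|u\|_\infty\le\|u\|/\kappa\le\varepsilon/\kappa$. Hmm — this gives $|u(k)|\le\varepsilon/\kappa$ rather than $|u(k)|\le\varepsilon$, so I must be slightly more careful: on this sublevel set,
$$
-\Phi(u)=\sum_{k=1}^{T}F_k(u(k))\le\sum_{k=1}^{T}\max_{|\xi|\le\varepsilon/\kappa}F_k(\xi).
$$
Actually the cleaner route is to take $\sigma:=\kappa^p\varepsilon^p/p$, so that $\Psi(u)\le\sigma$ forces $\|u\|\le\kappa\varepsilon$ and hence $\|u\|_\infty\le\varepsilon$; then $-\Phi(u)\le\sum_{k=1}^{T}\max_{|\xi|\le\varepsilon}F_k(\xi)$ on the whole sublevel set. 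With this $\sigma$, the threshold in Theorem \ref{CV} becomes
$$
-\frac{\inf_{\Psi^{-1}((-\infty,\sigma])}\Phi(u)}{\sigma}\le\frac{\sum_{k=1}^{T}\max_{|\xi|\le\varepsilon}F_k(\xi)}{\kappa^p\varepsilon^p/p}=\frac{p}{\kappa^p}\cdot\frac{\sum_{k=1}^{T}\max_{|\xi|\le\varepsilon}F_k(\xi)}{\varepsilon^p}<1,
$$
where the last strict inequality is precisely hypothesis \eqref{condizione}. Hence $\mu=1$ satisfies the hypothesis of Theorem \ref{CV}, and $J=\Psi+\Phi$ has a global minimum $u_0$ on $\Psi^{-1}((-\infty,\sigma))$.

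It remains to argue that $u_0$ is a solution of \eqref{dani3} with $\|u_0\|_\infty<\varepsilon$. Since $u_0\in\Psi^{-1}((-\infty,\sigma))$ we have $\Psi(u_0)<\sigma=\kappa^p\varepsilon^p/p$, i.e. $\|u_0\|<\kappa\varepsilon$, and so by \eqref{immersione2} $\|u_0\|_\infty\le\|u_0\|/\kappa<\varepsilon$; in particular $u_0$ lies in the \emph{interior} of the sublevel set (with respect to the topology of $H$, as $\Psi$ is continuous). Being a global minimum of $J$ on an open set containing it, $u_0$ is a local minimum of $J$ on $H$; since $H$ is finite-dimensional and $J$ is $C^1$, this forces $J'(u_0)=0$. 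By the variational identity recorded in Section \ref{section2}, $J'(u_0)=0$ means exactly that $u_0$ solves \eqref{dani3}. The only genuinely delicate point — and the one to state carefully — is the passage from ``global minimizer on the open sublevel set'' to ``critical point of $J$'', which relies on the strict inequality $\|u_0\|_\infty<\varepsilon$ ensuring $u_0$ is interior; the rest is bookkeeping with the embedding constant $\kappa$ and the definition of $\sigma$.
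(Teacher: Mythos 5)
Your proposal is correct and follows essentially the same route as the paper: the authors also apply Theorem \ref{CV} with $\Phi(u)=-\sum_{k=1}^{T}F_k(u(k))$, the only cosmetic difference being that they take $\Psi(u)=\|u\|^p$ with $\mu=1/p$ and $\sigma=\kappa^p\varepsilon^p$, whereas you absorb the factor $1/p$ into $\Psi$ and take $\mu=1$, $\sigma=\kappa^p\varepsilon^p/p$. Your extra paragraph explaining why the minimizer on the open sublevel set is an interior point and hence a critical point of $J$ is a detail the paper leaves implicit, but it is not a different argument.
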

\begin{proof}
Let us apply Theorem \ref{CV} by choosing $E:=H$, and
$$
 \Phi(u):=-\displaystyle\sum_{k=1}^{T} F_k(u(k)),\qquad \Psi(u):=\|u\|^p,
$$
for every $u\in E$.\par
\indent Taking $\sigma:=\kappa^p\varepsilon^p$, clearly $\sigma>\displaystyle\inf_{u\in E}\Psi(u)$. Moreover, let us estimate from the above the following quantity
$$
\varphi(\sigma):=\frac{\displaystyle \sup_{u\in\Psi^{-1}((-\infty,\sigma])}\Phi(u)}{\sigma}.
$$

\indent Inequality \eqref{immersione2} yields
$$
\Psi^{-1}((-\infty,\sigma])\subseteq \left\{u\in E: \|u\|_{\infty} \le \varepsilon\right\}.
$$
Thus one has that
$$
\varphi(\sigma)\leq \frac{\displaystyle\sum_{k=1}^{T}\max_{|\xi|\leq \varepsilon} F_k(\xi)}{\kappa^p\varepsilon^p}.
$$
\indent Hence it follows, by \eqref{condizione}, that
$$
\frac{\displaystyle\sup_{u\in \Psi^{-1}((-\infty,\sigma])}\displaystyle\sum_{k=1}^{T} F_k(u(k))}{\sigma}<\frac{1}{p},
$$
that is,
$$
\frac{1}{p}>-\frac{\displaystyle\inf_{u\in \Psi^{-1}((-\infty,\sigma])}\Phi(u)}{\sigma}.
$$
 \indent
    Therefore, the assertion of Theorem \ref{CV} follows and the existence of one solution $u\in \Psi^{-1}((-\infty,\sigma))$ to our problem is established.
\end{proof}
\begin{remark}\label{Y1}\rm{If in Theorem \ref{Esistenza} the function $f$ is nonnegative, hypothesis \eqref{condizione} assumes a simpler form
$$
\frac{\displaystyle\sum_{k=1}^{T} F_k(\varepsilon)}{\varepsilon^p}<\frac{\kappa^p}{p}.
$$
Moreover, if for some $\bar k\in\ZZ[1,T]$, $f(\bar k,0)\neq 0$, the obtained solution is clearly non-zero.
}
\end{remark}

\section{A parametric case}

In this section we shall study the following discrete parametric problem
\begin{equation}\tag{$D^{f}_\alpha$} \label{dani34}
\left\{
\begin{array}{l}
-\Delta(\phi_p(\Delta u({k-1})))
= \alpha f(k,u(k)) ,\quad \forall\; k \in
\ZZ[1,T]
\\ {u(0)=u({T+1})=0, } \quad \quad \qquad \qquad \qquad \\
\end{array}
\right.
\end{equation}

\noindent where $\alpha$ is a real positive parameter.\par

\indent For our goal, in order to obtain positive solutions to problem \eqref{dani34}, i.e. $u(k)>0$ for each $k\in \ZZ[1,T]$, we
shall need the following consequence of the {strong comparison
principle}, see \cite[Lemma 2.3]{APR2}.
\begin{lemma}\label{lemma}
If
\[
-\Delta(\phi_p(\Delta u(k-1)))\geq 0,\quad \forall\; k\in \ZZ[1,T]
\]
\[
u(0)\geq 0, \quad  u(T+1)\geq 0,
\]
then either $u>0$ in $\ZZ[1,T],$ or $u\equiv 0$.
\end{lemma}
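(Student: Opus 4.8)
The plan is to notice that the hypothesis forces $u$ to be \emph{discretely concave}, and then to run the elementary maximum principle for concave sequences. First I would set $a_k:=\Delta u(k-1)=u(k)-u(k-1)$ for $k\in\ZZ[1,T+1]$. Since $\phi_p$ is a strictly increasing bijection of $\erre$, the assumption $-\Delta(\phi_p(\Delta u(k-1)))\geq 0$ on $\ZZ[1,T]$ is exactly $\phi_p(a_{k+1})\leq\phi_p(a_k)$, i.e. $a_{k+1}\leq a_k$; hence $(a_k)_{k=1}^{T+1}$ is non-increasing, equivalently $u(k-1)+u(k+1)\leq 2u(k)$ for every $k\in\ZZ[1,T]$.

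Next I would prove that $u\geq 0$ on $\ZZ[0,T+1]$. Fix $k_0\in\ZZ[1,T]$. Monotonicity of $(a_k)$ gives $u(k_0)-u(0)=\sum_{k=1}^{k_0}a_k\geq k_0 a_{k_0}$ and $u(T+1)-u(k_0)=\sum_{k=k_0+1}^{T+1}a_k\leq(T+1-k_0)a_{k_0}$. Eliminating $a_{k_0}$ between these two inequalities (both $k_0$ and $T+1-k_0$ are positive integers) yields
\[
u(k_0)\ \geq\ \frac{(T+1-k_0)\,u(0)+k_0\,u(T+1)}{T+1}\ \geq\ 0,
\]
where the last inequality uses $u(0)\geq 0$ and $u(T+1)\geq 0$.

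Finally I would establish the dichotomy by contradiction: suppose $u\not\equiv 0$ but $u(k_0)=0$ for some $k_0\in\ZZ[1,T]$ (by the previous step this is the only way $u$ can fail to be strictly positive on $\ZZ[1,T]$). The displayed estimate forces $(T+1-k_0)u(0)+k_0 u(T+1)=0$, and since both summands are nonnegative with strictly positive coefficients, $u(0)=u(T+1)=0$. Then $\sum_{k=1}^{k_0}a_k=0$ and $\sum_{k=k_0+1}^{T+1}a_k=0$; since $(a_k)$ is non-increasing, the average of the first block is $\geq a_{k_0}$ and that of the second is $\leq a_{k_0+1}$, so $a_{k_0}\leq 0\leq a_{k_0+1}\leq a_{k_0}$ and hence $a_{k_0}=a_{k_0+1}=0$. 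Consequently $a_1=\cdots=a_{k_0-1}=0$ (a vanishing sum of terms that are all $\geq a_{k_0}=0$) and likewise $a_{k_0+2}=\cdots=a_{T+1}=0$; therefore $a_k\equiv 0$ and $u\equiv u(0)=0$, a contradiction. Thus $u>0$ on $\ZZ[1,T]$, or else $u\equiv 0$. The only mildly delicate point is this squeezing argument forcing $a_{k_0}$ and $a_{k_0+1}$ to vanish; everything else is the standard discrete concave maximum principle. An alternative route is to pass to the concave piecewise-linear interpolant of $u$ on $[0,T+1]$ and use that a concave function attaining its global minimum at an interior point must be constant.
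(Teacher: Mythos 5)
Your proof is correct, and it is worth noting that the paper does not actually prove this lemma at all: it simply cites it as a known consequence of the strong comparison principle, referring to Lemma 2.3 of Agarwal, Perera and O'Regan \cite{APR2}. What you have produced is therefore a self-contained elementary alternative to an external reference. The key reduction is sound: since $\phi_p$ is a strictly increasing bijection of $\erre$ for $p>1$, the differential inequality is equivalent to the monotonicity $a_{k+1}\leq a_k$ of the increments, i.e.\ to discrete concavity of $u$. Your two-sided estimate $k_0 a_{k_0}\leq u(k_0)-u(0)$ and $u(T+1)-u(k_0)\leq (T+1-k_0)a_{k_0}$ correctly eliminates $a_{k_0}$ to give the chord inequality $u(k_0)\geq \bigl((T+1-k_0)u(0)+k_0 u(T+1)\bigr)/(T+1)\geq 0$, which is exactly the statement that a concave sequence lies above the line joining its boundary values. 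The dichotomy step is also watertight: if $u(k_0)=0$ at an interior node, the chord inequality forces $u(0)=u(T+1)=0$, both partial sums of the $a_k$ over the two blocks vanish, and the squeeze $0\leq a_{k_0+1}\leq a_{k_0}\leq 0$ (obtained by comparing each block's average with its extreme term) propagates to annihilate every increment, so $u\equiv 0$. The only mild redundancy is that once you know $u\geq 0$ and $u(0)=u(T+1)=0$ with $u$ concave, constancy already follows from the interpolant remark you make at the end; but the discrete bookkeeping you give is rigorous and avoids any appeal to continuous convexity theory. Either way, your argument fills in a proof the paper leaves to the literature.
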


Moreover, let $\lambda_{1,p},$ $\varphi_1>0$ be the first eigenvalue and eigenfunction of the problem
\begin{equation}\tag{$D_{\lambda, p}$} \label{dani35}
\left\{
\begin{array}{l}
-\Delta(\phi_p(\Delta u({k-1})))
= \lambda \phi_p(u({k})) ,\quad \forall\; k \in
\ZZ[1,T]
\\ {u(0)=u({T+1})=0. } \quad \quad \qquad \qquad \qquad \\
\end{array}
\right.
\end{equation}
\indent As observed in \cite{APR2}, the following variational characterization
\begin{equation}\label{lambda1}
\lambda_{1, p}=\min_{E\setminus\{0_H\}}\frac{\displaystyle\sum_{k=1}^{T+1}|\Delta u({k-1})|^p}{\displaystyle\sum_{k=1}^{T}|u(k)|^p},
\end{equation}
holds.\par
\indent Taking into account the above facts, an important consequence of Theorem \ref{Esistenza} is the following.

\begin{theorem}\label{Esistenza2}
Let $f:\ZZ[1,T]\times[0,+\infty)\rightarrow [0,+\infty)$ be a continuous function satisfying the following hypotheses$:$
$$
\lim_{\xi\rightarrow +\infty}\frac{\displaystyle\sum_{k=1}^{T}F_k(\xi)}{\xi^p}=0,
$$
and
$$
\gamma_k:=\liminf_{\xi\rightarrow 0^+}\frac{\displaystyle F_k(\xi)}{\xi^p}>0,
$$
for every $k\in \ZZ[1,T]$. Then for every
$$
\alpha>\frac{\lambda_{1, p}}{p\displaystyle\min_{k\in \ZZ[1,T]}\gamma_k},
$$
problem \eqref{dani34} has at least one positive solution.
\end{theorem}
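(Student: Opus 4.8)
The plan is to deduce Theorem~\ref{Esistenza2} from Theorem~\ref{Esistenza} by verifying, for every sufficiently large $\alpha$, the smallness condition \eqref{condizione} for the function $\alpha f$ in place of $f$, and then upgrading the resulting solution to a positive one by means of Lemma~\ref{lemma}. Since $f\ge 0$, Remark~\ref{Y1} tells us that for $\alpha f$ condition \eqref{condizione} reduces to
$$
\alpha\,\frac{\displaystyle\sum_{k=1}^{T}F_k(\varepsilon)}{\varepsilon^p}<\frac{\kappa^p}{p}
$$
for some $\varepsilon>0$. So the whole argument turns on producing a good test radius $\varepsilon>0$.

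First I would exploit the hypothesis at zero. Fix $\delta>0$ with $\delta<\min_{k}\gamma_k$. By definition of $\liminf$ there is $\varepsilon_0>0$ such that $F_k(\xi)\ge \delta\,\xi^p$ for every $k\in\ZZ[1,T]$ and every $0<\xi\le\varepsilon_0$; summing, $\sum_{k=1}^T F_k(\xi)\ge T\delta\,\xi^p$ on $(0,\varepsilon_0]$, hence $\frac{\sum_k F_k(\xi)}{\xi^p}\ge T\delta$ there. Wait — that gives a lower bound, which is the wrong direction for \eqref{condizione}. The correct use of the zero-behaviour is instead to \emph{bound $\alpha$ from below} in terms of $\gamma_k$: I want, along a sequence $\varepsilon_n\to 0^+$ realizing the $\liminf$ for the worst index, the ratio $\sum_k F_k(\varepsilon_n)/\varepsilon_n^p$ to be as small as possible, i.e. close to $\sum_k\gamma_k$ (or at least controlled). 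More precisely, using $\liminf$ one picks $\varepsilon>0$ arbitrarily small with $F_k(\varepsilon)/\varepsilon^p < \gamma_k+\eta$ — no, again $\liminf$ only controls one index at a time from below. The clean route, which I would adopt, is: the relevant lower threshold for $\alpha$ comes from rewriting the desired inequality as
$$
\alpha < \frac{\kappa^p}{p}\cdot\frac{\varepsilon^p}{\sum_{k=1}^T F_k(\varepsilon)},
$$
so it suffices to show $\sup_{\varepsilon>0}\frac{\varepsilon^p}{\sum_k F_k(\varepsilon)}$ is \emph{not too small}; equivalently $\inf_{\varepsilon>0}\frac{\sum_k F_k(\varepsilon)}{\varepsilon^p}$ is finite, which is automatic, but we need a sharp value. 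The hypothesis at infinity forces this infimum of $\sum_k F_k(\xi)/\xi^p$ over $\xi>0$ to be governed by the behaviour near $0$ and near $\infty$: as $\xi\to\infty$ the ratio $\to 0$, so on no account is the infimum realized at infinity — it is $0$ in the limit! Therefore $\frac{\varepsilon^p}{\sum_k F_k(\varepsilon)}\to\infty$ as $\varepsilon\to\infty$, and \emph{any} $\alpha>0$ admits an admissible large $\varepsilon$. This already yields a solution $u_\alpha$ of \eqref{dani34} with $\|u_\alpha\|_\infty<\varepsilon$ for every $\alpha>0$; the role of the threshold $\alpha>\lambda_{1,p}/(p\min_k\gamma_k)$ must therefore be to guarantee the solution is \emph{nontrivial and positive}, not merely to apply Theorem~\ref{Esistenza}.

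So the substantive step is the lower bound on $\alpha$. I would argue that the solution $u_\alpha$ furnished above cannot be $0$ once $\alpha$ exceeds the stated threshold, by an energy comparison using the eigenfunction $\varphi_1$. The solution produced by Theorem~\ref{CV}/Theorem~\ref{Esistenza} is a global minimizer of $J_{\mu}= \mu\Psi+\Phi$ restricted to $\Psi^{-1}((-\infty,\sigma))$ with $\mu=1/(p\alpha)$ (after rescaling $\alpha f$), so in particular its energy is $\le 0=J_\mu(0)$. If I can exhibit a small multiple $t\varphi_1$ lying in $\Psi^{-1}((-\infty,\sigma))$ with $J_\mu(t\varphi_1)<0$, then the minimizer is nonzero. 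Using \eqref{lambda1}, $\Psi(t\varphi_1)=t^p\lambda_{1,p}\sum_k|\varphi_1(k)|^p$, while $\Phi(t\varphi_1)=-\alpha\sum_k F_k(t\varphi_1(k))\le -\alpha\,(\min_k\gamma_k-\eta)\,t^p\sum_k|\varphi_1(k)|^p$ for $t$ small (by the $\liminf$ hypothesis at $0$, applied to each index, choosing $t$ small enough that all $t\varphi_1(k)$ are below the common threshold $\varepsilon_0$). Then
$$
J_\mu(t\varphi_1)=\frac{1}{p\alpha}\,t^p\lambda_{1,p}\sum_k|\varphi_1(k)|^p-\alpha(\min_k\gamma_k-\eta)t^p\sum_k|\varphi_1(k)|^p\cdot\frac{1}{?}
$$
— being careful with the placement of $\alpha$: with $\Psi=\|u\|^p$ and $\mu=1/(p\alpha)$ the nonlinear term in $J_\mu$ is $\Phi(u)=-\sum_k F_k(u(k))$ (not $\alpha F_k$), and one checks $J_\mu(t\varphi_1)<0 \iff \frac{\lambda_{1,p}}{p\alpha}<\min_k\gamma_k-\eta$, i.e. $\alpha>\frac{\lambda_{1,p}}{p(\min_k\gamma_k-\eta)}$; letting $\eta\to0$ gives exactly the stated bound. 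Hence for such $\alpha$ the minimizer $u_\alpha$ is nonzero. Finally, since $f\ge0$, $u_\alpha$ satisfies $-\Delta(\phi_p(\Delta u_\alpha(k-1)))=\alpha f(k,u_\alpha(k))\ge0$ with $u_\alpha(0)=u_\alpha(T+1)=0$, so Lemma~\ref{lemma} gives $u_\alpha>0$ on $\ZZ[1,T]$.

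The main obstacle I anticipate is bookkeeping the constants through the rescaling between the formulation $\min$-theorem (with parameter $\mu$ multiplying $\Psi=\|u\|^p$) and the parametric problem (with $\alpha$ multiplying $f$), so that the threshold $\lambda_{1,p}/(p\min_k\gamma_k)$ comes out exactly right; and ensuring the small test radius $t\|\varphi_1\|_\infty$ simultaneously stays inside the sublevel set $\Psi^{-1}((-\infty,\sigma))$ chosen when applying Theorem~\ref{Esistenza} (this is fine because $\sigma=\kappa^p\varepsilon^p$ with $\varepsilon$ large, while $t$ is taken small). A secondary point is verifying that the $\liminf$ hypothesis at $0$, which a priori controls each ratio $F_k(\xi)/\xi^p$ only along a subsequence, still yields, for each fixed $k$, the inequality $F_k(\xi)\ge(\gamma_k-\eta)\xi^p$ for \emph{all} small $\xi$ — which is precisely the content of $\liminf_{\xi\to0^+}F_k(\xi)/\xi^p=\gamma_k$, so this is immediate. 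With these in hand the proof is a short assembly of Theorem~\ref{Esistenza}, the eigenvalue characterization \eqref{lambda1}, and Lemma~\ref{lemma}.
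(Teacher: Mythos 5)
Your proposal is correct and follows essentially the same route as the paper: apply Theorem \ref{Esistenza} (via Remark \ref{Y1}) with a large test radius $\varepsilon$ supplied by the hypothesis at infinity, then use the $\liminf$ hypothesis at zero together with the eigenvalue characterization \eqref{lambda1} to show $J_\alpha(t\varphi_1)<0$ for small $t>0$, so the minimizer is non-zero, and finally invoke Lemma \ref{lemma} for positivity. The only step you elide is that $f$ is defined only on $\ZZ[1,T]\times[0,+\infty)$, so before applying Theorem \ref{Esistenza} one must extend it (the paper uses the truncation $\widetilde f(k,t):=f(k,\max\{t,0\})$, for which Lemma \ref{lemma} then shows every non-zero solution is positive and hence solves the original problem); this is routine and does not affect the rest of your argument.
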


\begin{proof} Let $\alpha$ be as in the conclusion, and define
$$
\widetilde{f}(k,t):=
\left\{\begin{array}{ll}
f(k,t) & \mbox{if}\,\,\,t\geq 0\\
f(k,0) & \mbox{if}\,\,\,t<0.
\end{array}\right.
$$
for every $k \in
\ZZ[1,T]$. Consider now the following problem
\begin{equation}\tag{$D^{\widetilde{f}}_\alpha$} \label{dani36}
\left\{
\begin{array}{l}
-\Delta(\phi_p(\Delta u({k-1})))
= \alpha \widetilde{f}(k,u(k)) ,\quad \forall\; k \in
\ZZ[1,T]
\\ {u(0)=u({T+1})=0. } \quad \quad \qquad \qquad \qquad \\
\end{array}
\right.
\end{equation}
\indent By Lemma \ref{lemma}, every non-zero solution of problem \eqref{dani36} is positive. Furthermore, every positive solution of \eqref{dani36} also solves our initial problem \eqref{dani34}. Now, since
$$
\lim_{\xi\rightarrow +\infty}\frac{\displaystyle\sum_{k=1}^{T}F_k(\xi)}{\xi^p}=0,
$$
there exists $\varepsilon>0$ such that
$$
\frac{\displaystyle\sum_{k=1}^{T} F_k(\varepsilon)}{\varepsilon^p}<\frac{\kappa^p}{p}.$$
\indent Hence, bearing in mind Remark \ref{Y1}, condition \eqref{condizione} of Theorem \ref{Esistenza} holds.\par
  Thus problem \eqref{dani34} admits a solution $u_\alpha\in H$ with $\|u_\alpha\|<\varepsilon$. In conclusion, we shall prove that $0_{H}$ is not a local minimum of
the functional
$$
J_\alpha(u):=\frac{1}{p}\sum_{k=1}^{T+1}|\Delta u({k-1})|^p-\alpha\sum_{k=1}^{T}\displaystyle\int_0^{u(k)} f(k,t)dt,
$$
i.e. the obtained solution $u_\alpha$ is non-zero.\par
For this purpose, let us observe that the first eigenfunction $\varphi_1\in H$ is positive and it follows by (\ref{lambda1}) that
\begin{equation}\label{ScV}
\|\varphi_1\|^p=\lambda_{1, p}\sum_{k=1}^{T}\varphi_1(k)^p.
\end{equation}
\indent Since
$$
\gamma_k>\displaystyle\min_{k\in \ZZ[1,T]}\gamma_k>\frac{\lambda_{1, p}}{p\alpha},
$$
for every $k\in \ZZ[1,T]$,
 there exists $\delta>0$ such that
\begin{equation}\label{ScV2}
F_k(\xi)>\frac{\lambda_{1, p}}{p\alpha}\xi^p,
\end{equation}
\noindent for every $k\in \ZZ[1,T]$ and $\xi\in (0,\delta)$.\par
 Hence, we can define $\theta_{\zeta}(k):=\zeta \varphi_1(k)$, for every $k\in \ZZ[0,T+1]$, where $$\zeta\in \Lambda_\delta:=\left(0,\displaystyle\frac{\delta}{\displaystyle\max_{k\in \ZZ[1,T]}\varphi_1(k)}\right).$$
\indent Taking into account \eqref{ScV2} and \eqref{ScV}, we easily get
 \begin{eqnarray*}
\alpha\sum_{k=1}^{T}F_k(\theta_\zeta(k)) &> & \frac{\lambda_{1, p}\displaystyle}{p} \sum_{k=1}^{T}\theta_\zeta(k)^{p}\\\nonumber
               &=& \frac{1}{p}\|\theta_\zeta\|^p,\nonumber
\end{eqnarray*}
\noindent that is,
$$
J_\alpha(\theta_\zeta)=\frac{1}{p}\|\theta_\zeta\|^p-\alpha\sum_{k=1}^{T}F_k(\theta_\zeta(k))<0,
$$
\noindent for every $\zeta\in \Lambda_\delta$. The proof is thus complete.
\end{proof}

\begin{remark}\rm{In Theorem \ref{Esistenza2}, looking at the behaviour of the function
$$
h(\xi):=\frac{\displaystyle\sum_{k=1}^{T}F_k(\xi)}{\xi^p},\,\,\,\,\,\,(\forall\,\xi >0)
$$
at infinity, the existence of one positive solution has been proved.
On the other hand, if the function $f(k,\cdot):\erre\rightarrow \erre$ has a $s$-sublinear potential $F_k$ with $s<p$, for every $k\in \ZZ[1,T]$, the behaviour at zero of the map
$$
\chi(\varepsilon):=\frac{\displaystyle\sum_{k=1}^{T}\max_{|\xi|\leq \varepsilon} F_k(\xi)}{\varepsilon^p},\,\,\,\,\,\,(\forall\,\varepsilon >0)
$$
influences the existence of multiple solutions. More precisely, requiring that
\begin{equation}\label{ScVG}
\lim_{\varepsilon\rightarrow 0^+}\chi(\varepsilon)=0,
\end{equation}
by using variational arguments, one can prove that there exists
a real interval of parameters $\Lambda$ such that, for every $\alpha\in \Lambda$, the problem \eqref{dani34} admits at least three solutions. If, instead of \eqref{ScVG}, we assume that
\begin{equation}\label{ScV4}
\chi(c)<\frac{2^{p-1}}{(T+1)^{p-1}}\left(h(d)-\frac{c^p}{d^p}\chi(c)\right),
\end{equation}
\noindent for some positive constants $c<d$, then for every
$$
\alpha\in\left]\frac{2}{p\displaystyle\left(h(d)-\frac{c^p}{d^p}\chi(c)\right)},\frac{2^p}{p\chi(c)(T+1)^{p-1}}\right[,
$$
there exist at least three distinct solutions of the problem \eqref{dani34}.
Clearly condition \eqref{ScV4} is technical and quite involved. Finally, we also note that a more precise result can be obtained if $T$ is even.
}
\end{remark}

\begin{remark}\rm{
It is easy to see that Theorem \ref{Esistenza3} in Introduction is a consequence of Theorem \ref{Esistenza2} bearing in mind that the first eigenvalue of the problem
\begin{equation} \tag{$D_\lambda$} \label{Do}
\left\{
\begin{array}{l}
-\Delta^2u({k-1})=\lambda u(k),\quad \forall\; k  \in
\ZZ[1,T] \\
{u(0)=u({T+1})=0,}\\
\end{array}
\right.
\end{equation}
is given by
$$
\lambda_1:=4\sin^2\left(\frac{\pi}{2(T+1)}\right),
$$
see, for instance, \cite[p. 150]{BoMawhin} and \cite{Sc}. More precisely, as is well-known, the eigenvalues $\lambda_k$, for $k\in \ZZ[1,T]$, of problem \eqref{Do} are exactly the eigenvalues of the positive-definite matrix
$$
A:=\left(\begin{array}{ccccc}
  2 & -1 & 0 & ... & 0 \\
  -1 & 2 & -1 & ... & 0 \\
     &  & \ddots &  &  \\
  0 & ... & -1 & 2 & -1 \\
  0 & ... & 0 & -1 & 2
\end{array}\right)_{T\times T}.
$$
Thus it follows that
$$
\lambda_k=4\sin^2\left(\frac{k\pi}{2(T+1)}\right),\,\,\,\,\,\,\forall\, k\in \ZZ[1,T].
$$
}
\end{remark}
 A direct application of this result yields the following.

\begin{es}\label{esempio0}\rm{For every
$$
\alpha\in (\lambda_1,+\infty),
$$
the following second-order discrete problem
\begin{equation} \tag{$S_\alpha$} \label{dxi}
\left\{
\begin{array}{l}
-\Delta^2u({k-1})=\alpha\displaystyle\frac{u(k)}{1+u(k)^2},\quad \forall\; k  \in
{\mathbb{Z}}[1,T] \\
{u(0)=u({T+1})=0,}\\
\end{array}
\right.
\end{equation}
has at least one positive solution.
}
\end{es}

\begin{remark}\rm{
In Example \ref{esempio0}, for every $\alpha$ sufficiently large, our approach ensures the existence of at least one positive solution $u_\alpha\in H$ for the discrete problem \eqref{dxi}. A more delicate problem is to find a concrete expression of the function $u_\alpha$ that one may hope to be exploited by numerical methods.}
\end{remark}

\begin{remark}\rm{
We refer to the paper of Galewski and Orpel \cite{GO} for several multiplicity results on discrete partial difference equations. See also the papers \cite{A4,KMR,KMRT,MarcuMolica, MolicaRepovs1, MolicaRepovs2} for recent contributions to discrete problems.}
\end{remark}
{\bf Acknowledgements.}  This paper was written when the first author was visiting professor at the University of Ljubljana in 2013. He expresses his gratitude to the host institution for warm hospitality.
  The manuscript was realized within the auspices of the GNAMPA Project 2013 entitled {\it Problemi non-locali di tipo Laplaciano frazionario} and the SRA grants P1-0292-0101 and J1-5435-0101. The authors warmly thank the anonymous referees for their useful comments on the manuscript.

\end{document}